\newtheorem{theorem}{Theorem}
\newtheorem{proposition}{Proposition}
\newtheorem{lemma}{Lemma}
\newtheorem{remark}{Remark}
\newenvironment{proof}[1][Proof]{\noindent\textit{#1.} }{\hfill$\Box$\medskip}
\title{New examples of systems of the Kowalevski type}
\author{Vladimir Dragovi\'c (*) and Katarina Kuki\'c (**)}
\date{}
\begin{document}

\maketitle

\medskip

\centerline{(*) Mathematical Institute SANU}

\centerline{Kneza Mihaila 36, 11000 Belgrade, Serbia}

\smallskip

\centerline{Mathematical Physics Group, University of Lisbon}

\smallskip

\centerline{e-mail: {\tt vladad@mi.sanu.ac.rs}}

\smallskip

\centerline{(**) Faculty for Traffic and Transport Engineering}
\centerline{Vojvode Stepe 305, 11000 Belgrade, Serbia}

\smallskip
\centerline{e-mail: {\tt k.mijailovic@sf.bg.ac.rs}}

\begin{abstract}
 A new examples of integrable dynamical systems are
constructed. An integration procedure leading to genus two
theta-functions is presented. It is based on a recent
notion of discriminantly separable polynomials. They have appeared
in a recent reconsideration of the celebrated Kowalevski top, and
their role here is analogue to the situation with the classical
Kowalevski integration procedure.

\end{abstract}

\newpage

\tableofcontents

\newpage

\section{Introduction}\label{sec:intro}

\

A notion of discriminantly separable polynomials has been introduced
recently by one of the authors in \cite{Drag3}. It has been related
there to a new view on the classical integration procedure of
Kowalevski of her celebrated  top (see the original work \cite{Kow},
\cite{Kow1}, a classical presentation in \cite{Gol}, and for modern
approach, see \cite {BRST}, \cite{Aud}, \cite{Dub}). Following the
way the Kowalevski integration procedure has been coded in
\cite{Drag3}, we construct here a new class of integrable dynamical
systems of {\it Kowalevski type}. Their complete
integration procedure parallels the classical one, and leads to the
 formulae in the genus two theta-functions.\\

Let us recall the defintion of
discriminantly separable polynomials from \cite{Drag3}, here for polynomial $\mathcal F(x_1,x_2,s)$ of the second degree in each
of three variables. Polynomial $\mathcal{F}(x_1,x_2,s)$ is discriminantly separable if there exist polynomials $P_1$, $P_2$, $J$
of one variable each of degree not greater than four, such that
$$
\aligned \mathcal D_{x_1}\mathcal F(x_2,s)&=P_2(x_2)J(s)\\
\mathcal D_{x_2}\mathcal F(x_1,s)&=P_1(x_1)J(s)\\
\mathcal D_{s}\mathcal F(x_1, x_2)&=P_1(x_1)P_2(x_2),
\endaligned
$$
where $\mathcal D_s\mathcal F$ denotes the discriminant of $\mathcal
F$ understood as a polynomial in $s$. The discriminant is a
polynomial in the other two variables. When polynomials $P_1,P_2$ and $J$ coincide we say that $\mathcal F$ is strongly discriminantly separable.
Here we distinguish lemma stated in \cite{Drag3} we will refer on later in the text.
\begin{lemma}\label{lemma:drag3} For an arbitrary discriminately separable
polynomial $\mathcal F(x_3,x_1,x_2)$ of the second degree in each of the
variables $x_3, x_1, x_2$, its differential is separable on the
surface $\mathcal F(x_3,x_1,x_2)=0$:
$$
\frac{d \mathcal F}{\sqrt{f_3(x_3)f_1(x_1)f_2(x_2)}}= \frac {dx_3}{\sqrt
{f_3(x_3)}}+\frac {dx_1}{\sqrt {f_1(x_1)}}+\frac {dx_2}{\sqrt
{f_2(x_2)}}.
$$
\end{lemma}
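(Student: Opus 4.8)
The plan is to reduce the statement to a single elementary identity between the partial derivatives of $\mathcal F$ and its discriminants, and then feed in the discriminant-separability factorization. First I would exploit that $\mathcal F$ is quadratic in $x_3$, writing $\mathcal F = A x_3^2 + B x_3 + C$ with $A,B,C$ polynomials in $x_1,x_2$. Differentiating gives $\mathcal F_{x_3} = 2A x_3 + B$, and a one-line computation yields $(\mathcal F_{x_3})^2 = 4A\,\mathcal F + (B^2 - 4AC)$. Since $B^2-4AC$ is exactly the discriminant $\mathcal D_{x_3}\mathcal F$, restricting to the surface $\mathcal F = 0$ collapses this to $(\mathcal F_{x_3})^2 = \mathcal D_{x_3}\mathcal F$, i.e. $\mathcal F_{x_3} = \pm\sqrt{\mathcal D_{x_3}\mathcal F}$ there. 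The same argument, applied with the roles of the variables permuted, gives $\mathcal F_{x_1} = \pm\sqrt{\mathcal D_{x_1}\mathcal F}$ and $\mathcal F_{x_2} = \pm\sqrt{\mathcal D_{x_2}\mathcal F}$ on $\mathcal F = 0$.

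Next I would invoke discriminant separability, which is precisely the statement that the three discriminants factor as
$$\mathcal D_{x_3}\mathcal F = f_1(x_1)f_2(x_2),\quad \mathcal D_{x_1}\mathcal F = f_2(x_2)f_3(x_3),\quad \mathcal D_{x_2}\mathcal F = f_1(x_1)f_3(x_3),$$
with the factors named as in the statement. Substituting these into the relations above gives $\mathcal F_{x_3} = \pm\sqrt{f_1 f_2}$, $\mathcal F_{x_1} = \pm\sqrt{f_2 f_3}$, and $\mathcal F_{x_2} = \pm\sqrt{f_1 f_3}$. The decisive cancellation is then purely formal: dividing $d\mathcal F = \mathcal F_{x_3}\,dx_3 + \mathcal F_{x_1}\,dx_1 + \mathcal F_{x_2}\,dx_2$ by $\sqrt{f_1 f_2 f_3}$ turns each coefficient $\mathcal F_{x_i}/\sqrt{f_1 f_2 f_3}$ into $\pm 1/\sqrt{f_i}$, because the factor absent from the numerator's radical is always the one left over in the denominator. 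Collecting the three terms reproduces exactly the right-hand side of the lemma.

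The only genuinely delicate point is the coherent choice of the three signs, that is, the selection of branches of $\sqrt{f_1},\sqrt{f_2},\sqrt{f_3}$; the algebra itself is just the quadratic identity above. I would handle this by observing that each ambiguity $\pm$ reflects the two sheets of the quadratic $\mathcal F = 0$ over the corresponding coordinate, so that fixing a base point together with one branch of each radical determines the others consistently, and the orientation taken on the right-hand side can always be matched. This branch bookkeeping is the step requiring care, but it does not affect the structural content: once the factorization of the discriminants is inserted, the separation of the differential is immediate.
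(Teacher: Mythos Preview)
Your argument is correct and is in fact the standard route to this identity: the quadratic relation $(\partial_{x_i}\mathcal F)^2 = 4A_i\,\mathcal F + \mathcal D_{x_i}\mathcal F$ restricted to $\mathcal F=0$, followed by the discriminant-separability factorization, is exactly what is needed. Note, however, that the paper does not supply its own proof of this lemma; it is quoted from \cite{Drag3} and used as a black box, so there is no in-paper argument to compare against. Your proof is precisely the one given in that reference, including the remark that the sign choices amount to fixing coherent branches of the three radicals.
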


\medskip

Recall now {\it the
Kowalevski fundamental equation}:
\begin{equation}\label{eq:fundKowrel}
Q(s, x_1,x_2)=(x_1-x_2)^2s^2-2R(x_1,x_2)s - R_1(x_1,x_2)=0
\end{equation}
where
$$
\aligned
R(x_1,x_2)&=-x_1^2x_2^2+6l_1x_1x_2+2lc(x_1+x_2)+c^2-k^2\\
R_1(x_1,x_2)&=-6l_1x_1^2x_2^2-(c^2-k^2)(x_1+x_2)^2-4lcx_1x_2(x_1+x_2)+6l_1(c^2-k^2)-4c^2l^2.
\endaligned
$$
$Q(s,x_1,x_2)$ introduced with (\ref{eq:fundKowrel}) as a polynomial in three variables degree two in each of them satisfies
$$\mathcal D_{s}(Q)(x_1,x_2)=4P(x_1)P(x_2)$$
$$\mathcal D_{x_1}(Q)(s,x_2)=-8J(s)P(x_2),\,\mathcal D_{x_2}(Q)(s,x_1)=-8J(s)P(x_1)$$
with
$$P(x_i)=-x_i^4+6l_1 x_i^2 +4lc x_i+c^2-k^2,\, i=1,2$$
$$J(s)=s^3+3l_1s^2+s(c^2-k^2)+3l_1(c^2-k^2)-2l^2c^2.$$
Finally, notice that equations of Kowalevski's top in variables $x_i=p \pm \imath q,\, e_i=x_i^2+c(\gamma_1 \pm \imath\gamma_2),\,i=1,2$
which she introduces, may be rewritten in the form
\begin{equation}\label{eq:Kowsyst2}
\aligned
2 \dot x_1 &= -i (rx_1+c\gamma_3)\\
2 \dot x_2 &= i (rx_2+c\gamma_3)\\
\dot e_1 &= -ir e_1\\
\dot e_2 &=ir e_2
\endaligned
\end{equation}
with two additional differential equations for $\dot r$ and $\dot
\gamma_3$. If we denote
$$ f_1=r x_1 +c \gamma_3,\quad f_2=r x_2 +c \gamma_3,$$
one can easily check that functions $f_1,f_2$ have following property

\begin{equation}\label{eq:Kowsyst3}
\aligned
f_1^2 &= P(x_1) + e_1 (x_1-x_2)^2\\
f_2^2 &= P(x_2) + e_2 (x_1- x_2)^2.
\endaligned
\end{equation}
Generalization of (\ref{eq:Kowsyst2}) and (\ref{eq:Kowsyst3}) represent a base for systems of Kowalevski type we are going to introduce in next section.

\medskip

\section{Subclass of systems of the Kowalevski type}

\

We will start with a modal example.

Let us consider the next system of ordinary differential equations in variables $x_1,x_2,e_1,e_2,r,\gamma_3$
with constant parameter $g_2$ and condition $p \neq 0$:

\begin{equation}\label{eq:ex1-system}
\aligned
\dot p &= p q r\\
\dot q &= -\frac{1}{2} \left((p^2-q^2)r + \gamma_3 \right)\\
\dot r &= -\frac{1}{2} \frac{ p q - q (p^2+q^2)+q \gamma_1 -p \gamma_2 }{p^2}\\
\dot \gamma_1 &= (p^2 + q^2) q r + 2 p r \gamma_2 -q \gamma_3\\
\dot \gamma_2 &= -2 p q^2 r +p \gamma_3 - p r \gamma_1\\
\dot \gamma_3 &= \frac{g_2 p q + 4 q (p^2+q^2)^2 + 4q \gamma_1 (3p^2-q^2) + 4 p \gamma_2 (p^2-3 q^2)}{8p^2}.
\endaligned
\end{equation}

Before we start with an analysis of the first integrals of the system (\ref{eq:ex1-system}),
let us consider the existence of an invariant measure.

\begin{lemma}\label{lemma:invmes} The system (\ref{eq:ex1-system}) for $p \neq 0$ possesses an invariant measure with density
\begin{equation}\label{eq:rho}
\rho =\frac{1}{4p^2}.
\end{equation}
\end{lemma}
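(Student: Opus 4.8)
The standard criterion is the Liouville condition: a measure with density $\rho$ is invariant under the flow $\dot{x} = V(x)$ if and only if $\operatorname{div}(\rho V) = 0$, i.e.
$$\sum_i \frac{\partial}{\partial x_i}(\rho V_i) = 0,$$
where $V$ is the vector field on the right-hand side of the system. Let me work through this systematically.

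Let me denote the phase variables as $(p, q, r, \gamma_1, \gamma_2, \gamma_3)$ and write $\rho = \frac{1}{4p^2}$.

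The plan is to invoke the Liouville criterion in the form $\operatorname{div}(V) + V\cdot\nabla\log\rho = 0$, where $V$ denotes the vector field on the right-hand side of (\ref{eq:ex1-system}). This is equivalent to $\operatorname{div}(\rho V)=0$, since $\operatorname{div}(\rho V)=\nabla\rho\cdot V+\rho\,\operatorname{div}V$, and dividing by $\rho$ recovers the stated condition. Verifying this identity is exactly the assertion that the measure $\rho\,dp\,dq\,dr\,d\gamma_1\,d\gamma_2\,d\gamma_3$ is preserved by the flow.

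First I would compute the bare divergence $\operatorname{div}V=\sum_i \partial V_i/\partial x_i$. The structural observation that keeps this short is that four of the six diagonal partials vanish by inspection: the right-hand side of the $\dot r$ equation is independent of $r$, the $\dot\gamma_1$ equation is independent of $\gamma_1$, and likewise the $\dot\gamma_2$ and $\dot\gamma_3$ equations do not contain $\gamma_2$ and $\gamma_3$ respectively. Only the first two equations contribute, giving $\partial\dot p/\partial p = qr$ and $\partial\dot q/\partial q = -\tfrac12(-2qr)=qr$, so that $\operatorname{div}V = 2qr$.

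Next I would compute the correction term. Since $\log\rho = -\log 4 - 2\log p$ depends on $p$ alone, one has $\nabla\log\rho = (-2/p,0,0,0,0,0)$, whence $V\cdot\nabla\log\rho = -\tfrac{2}{p}\,\dot p = -\tfrac{2}{p}\,pqr = -2qr$. Adding the two contributions yields $\operatorname{div}V + V\cdot\nabla\log\rho = 2qr - 2qr = 0$, which is precisely the invariance condition, and completes the verification.

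I do not anticipate a genuine obstacle: the argument is a direct calculus exercise rather than a structural one. The only point demanding care is confirming that the last four components of $V$ are each free of their own differentiated variable, since this is exactly what collapses the divergence to the single term $2qr$. The factor $1/p^2$ in $\rho$ is then tuned precisely to supply the cancelling term $-2qr$, which explains the particular shape of the density $\rho=1/(4p^2)$; the normalizing constant $\tfrac14$ plays no role in the invariance and is immaterial here.
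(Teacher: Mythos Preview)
Your proof is correct and follows essentially the same approach as the paper: compute $\operatorname{div}V=2qr$ and then verify the Liouville condition $\operatorname{div}(\rho V)=0$ for $\rho=1/(4p^2)$. The only difference is that you carry out the last step explicitly via the logarithmic form $\operatorname{div}V+V\cdot\nabla\log\rho=0$, whereas the paper simply asserts the verification; your treatment is a bit more informative but not a genuinely different route.
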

\begin{proof}
Rewrite system of equations (\ref{eq:ex1-system}) in the form:
$$\frac{dp}{X_1}=\frac{dq}{X_2}=\frac{dr}{X_3}=\frac{d\gamma_1}{X_4}=\frac{d\gamma_2}{X_5}=\frac{d\gamma_3}{X_6}=dt$$
where
$$
\aligned
X_1&=p q r\\
X_2&=-\frac{1}{2} \left((p^2-q^2)r + \gamma_3 \right)\\
X_3&=-\frac{1}{2} \frac{ p q - q (p^2+q^2)+q \gamma_1 -p \gamma_2 }{p^2}\\
X_4&=(p^2 + q^2) q r + 2 p r \gamma_2 -q \gamma_3\\
X_5&=-2 p q^2 r +p \gamma_3 - p r \gamma_1\\
X_6&=\frac{g_2 p q + 4 q (p^2+q^2)^2 + 4q \gamma_1 (3p^2-q^2) + 4 p \gamma_2 (p^2-3 q^2)}{8p^2}.
\endaligned
$$
Divergence of $\mathbf{X}=(X_1,X_2,X_3,X_4,X_5,X_6)$ is nonzero:
$$\frac{\partial X_1}{\partial p}+\frac{\partial X_2}{\partial q}+\frac{\partial X_3}{\partial r}+\frac{\partial X_4}{\partial \gamma_1}+\frac{\partial X_5}{\partial \gamma_2}+\frac{\partial X_6}{\partial \gamma_3}=2qr.$$
Simple check shows that density function $\rho$ such that
$$\frac{\partial \rho X_1}{\partial p}+\frac{\partial \rho X_2}{\partial q}+\frac{\partial \rho X_3}{\partial r}+\frac{\partial \rho X_4}{\partial \gamma_1}+\frac{\partial \rho X_5}{\partial \gamma_2}+\frac{\partial \rho X_6}{\partial \gamma_3}=0$$
is given by (\ref{eq:rho}).
\end{proof}

\medskip

Now, we are going to focus on the structure of the first integrals
of the system (\ref{eq:ex1-system}). In order to put this question in a wider context, let
us first make a change of variables:
\begin{equation}\label{eq:ex1-change1}
\aligned
 x_1 &= p + \imath q \\
 x_2 &= p - \imath q \\
 e_1 &= x_1^2 + \gamma_1 + \imath \gamma_2\\
 e_2 &= x_2^2 + \gamma_1 - \imath \gamma_2.
\endaligned
\end{equation}

The system (\ref{eq:ex1-system}) after change (\ref{eq:ex1-change1}) becomes

\begin{equation}\label{eq:ex1-system1}
\aligned
\dot x_1 &= -\frac{\imath}{2} (x_1^2 r +\gamma_3)\\
\dot x_2 &= \frac{\imath}{2} (x_2^2 r +\gamma_3)\\
\dot e_1 &= -\imath (x_1+x_2) r e_1\\
\dot e_2 &= \imath (x_1+x_2) r e_2\\
\dot r &= \frac{\imath}{2} \frac{x_1^2 - x_2^2 + 2 e_2 x_1 - 2 e_1 x_2}{(x_1+x_2)^2}\\
\dot \gamma_3 &= -\frac{\imath}{8} \frac{g_2(x_1^2-x_2^2)+8e_2 x_1^3 -8e_1 x_2^3}{(x_1+x_2)^2}.
\endaligned
\end{equation}

\medskip

Now, let's make assumptions for a subclass of systems of ordinary differential equations that will also include our modal example. After introducing such
systems and establish relations that will hold for them in Theorem \ref{th:integrals} we will return to (\ref{eq:ex1-system1}) and show how one can apply
Kowalevski's procedure from \cite {Kow} on modal example and in the same way on a whole class of systems we are going to introduce.\\

\medskip

Suppose, that a  given system in variables $x_1,x_2,e_1,e_2,r,\gamma_3$, after some transformations  reduces
to

\begin{equation}\label{eq:analysis}
\aligned
2 \dot x_1 &= - i f_1\\
2 \dot x_2 &= i f_2\\
\dot e_1 &= -m e_1\\
\dot e_2 &= m e_2
\endaligned
\end{equation}

where
\begin{equation}\label{eq:analysis1}
\aligned
f_1^2 &= P(x_1) + e_1 A(x_1,x_2)\\
f_2^2 &= P(x_2) + e_2 A(x_1, x_2).
\endaligned
\end{equation}

Here $f_i,\,i=1,2$ and $m$ represent functions of system's variables. Notice here that all systems of this type
will have the first integral $$e_1e_2=k^2.$$

Suppose additionally, that the first integrals of the initial system
reduce to a relation
\begin{equation}\label{eq:integral}
 P(x_2)e_1+P(x_1)e_2=C(x_1,x_2)-e_1e_2A(x_1, x_2)
\end{equation}
with $A(x_1,x_2),\,C(x_1,x_2)$ polynomials in two variables and $P(x_i)$ in one.\\

Systems satisfying above assumptions (\ref{eq:analysis}), (\ref{eq:analysis1}) and (\ref{eq:integral}) we will call systems of Kowalevski type.\\

We are looking for possible first integrals of such systems in the form

\begin{equation}\label{eq:firstint}
\aligned
 r^2 &= E + p_2 e_1 + p_1 e_2\\
 r \gamma_3 &= F-q_2e_1 - q_1e_2\\
  \gamma_3^2& = G +r_2e_1 + r_1e_2\\
e_1 \cdot e_2 &= k^2
\endaligned
\end{equation}
where $p_i,q_i,r_i,E,F,G$ are functions of $x_1,x_2$. Subclass of Kowalevski-type systems for which functions $f_i, i=1,2$ are of type
\begin{equation}\label{eq:f_i}
f_i=x_i^{m_i}\cdot r +x_i^{n_i} \cdot \gamma_3, \quad m_i,\,n_i \in \mathbb{Z}  ,\, i=1,2
\end{equation}
is a subject of next theorem. In the next section we will show how one can reconstruct system of equations for that subclass.

\begin{theorem}\label{th:integrals} For a system which reduces
to (\ref{eq:analysis}), (\ref{eq:analysis1}), (\ref{eq:integral}) with functions $f_i$ introduced in (\ref{eq:f_i}), and at least one of conditions $m_1 \neq n_1$ or $m_2 \neq n_2$ is valid,
relations in the form (\ref{eq:firstint}) are satisfied for expressions:

\begin{equation}\nonumber
\aligned p_1 &=\frac{A
x_1^{2n_1}}{(x_1^{m_1}x_2^{n_2}-x_2^{m_2}x_1^{n_1})^2} \quad p_2 =\frac{A
x_2^{2n_2}}{(x_1^{m_1}x_2^{n_2}-x_2^{m_2}x_1^{n_1})^2}\\
q_1 &=\frac{A
x_1^{n_1+m_1}}{(x_1^{m_1}x_2^{n_2}-x_2^{m_2}x_1^{n_1})^2} \quad q_2 =\frac{A
x_2^{n_2+m_2}}{(x_1^{m_1}x_2^{n_2}-x_2^{m_2}x_1^{n_1})^2}\\
r_1 &=\frac{A
x_1^{2m_1}}{(x_1^{m_1}x_2^{n_2}-x_2^{m_2}x_1^{n_1})^2} \quad r_2 =\frac{A
x_2^{2m_2}}{(x_1^{m_1}x_2^{n_2}-x_2^{m_2}x_1^{n_1})^2}\\
E_{i} &=\frac{x_2^{2n_2}P(x_1)+x_1^{2n_1}P(x_2) \pm
B(x_1,x_2) x_1^{n_1}x_2^{n_2}}{(x_1^{m_1}x_2^{n_2}-x_2^{m_2}x_1^{n_1})^2} ,\, i=1,2\\
F_{i} &=
\frac{E_i(x_1^{2m_1}x_2^{2n_2}-x_1^{2n_1}x_2^{2m_2})+x_1^{2n_1}P(x_2)-x_2^{2n_2}P(x_1)}{2x_1^{n_1}x_2^{n_2}
(x_1^{n_1} x_2^{m_2} -x_1^{m_1} x_2^{n_2})} ,\, i=1,2\\
G_i &= \frac{E_i x_1^{m_1} x_2^{m_2}(x_1^{m_1} x_2^{n_2} -
x_1^{n_1} x_2^{m_2})+
x_1^{m_1+n_1}P(x_2)-x_2^{m_2+n_2}P(x_1)}{x_1^{n_1} x_2^{n_2}
(x_1^{m_1} x_2^{n_2}-x_1^{n_1} x_2^{m_2} )} ,\, i=1,2.
\endaligned
\end{equation}
Here by $B(x_1,x_2)$ we denoted
$$B^2(x_1,x_2)=4A(x_1,x_2)C(x_1,x_2)+4P(x_1)P(x_2).$$
\end{theorem}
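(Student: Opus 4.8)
The plan is to regard (\ref{eq:f_i}) as a linear system for the two unknowns $r$ and $\gamma_3$ and to invert it. Writing
$$
\begin{pmatrix} x_1^{m_1} & x_1^{n_1}\\ x_2^{m_2} & x_2^{n_2}\end{pmatrix}
\begin{pmatrix} r\\ \gamma_3\end{pmatrix}
=\begin{pmatrix} f_1\\ f_2\end{pmatrix},
$$
the determinant equals $D:=x_1^{m_1}x_2^{n_2}-x_1^{n_1}x_2^{m_2}$, and $D$ vanishes identically exactly when $m_1=n_1$ and $m_2=n_2$. Hence the standing hypothesis ($m_1\neq n_1$ or $m_2\neq n_2$) is precisely what guarantees $D\not\equiv 0$ and the solvability
$$
r=\frac{x_2^{n_2}f_1-x_1^{n_1}f_2}{D},\qquad
\gamma_3=\frac{x_1^{m_1}f_2-x_2^{m_2}f_1}{D}.
$$

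First I would square and multiply these to obtain $r^2$, $r\gamma_3$ and $\gamma_3^2$, and then substitute (\ref{eq:analysis1}) in the form $f_1^2=P(x_1)+e_1A$ and $f_2^2=P(x_2)+e_2A$. The coefficients of $e_1$ and $e_2$ then come entirely from these two substitutions (the remaining cross term $f_1f_2$ will be seen below to be free of $e_1,e_2$), so they can be read off at once; they reproduce the six claimed expressions $p_i,q_i,r_i$, all carrying the common factor $A/D^2$. As a consistency check the identities $q_i^2=p_ir_i$ hold, matching $(r\gamma_3)^2=r^2\gamma_3^2$.

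The crucial step, and the one I expect to be the main obstacle, is to evaluate the cross term $f_1f_2$, which carries the constant parts $E,F,G$. Here I would compute
$$
(f_1f_2)^2=f_1^2f_2^2=(P(x_1)+e_1A)(P(x_2)+e_2A),
$$
expand, and then eliminate the dynamical variables using the first integral $e_1e_2=k^2$ together with (\ref{eq:integral}) rewritten as $P(x_2)e_1+P(x_1)e_2=C-k^2A$. The $e_1,e_2$-dependence then cancels and one is left with $(f_1f_2)^2=P(x_1)P(x_2)+AC=B^2/4$, so that $f_1f_2=\pm B/2$. This is exactly where the relation (\ref{eq:integral}) and the definition of $B$ enter; the two signs account for the $\pm$ in the formula for $E_i$, and inserting $f_1f_2=\mp B/2$ into the constant part of $r^2$ yields the stated $E_i$.

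Finally, to obtain $F_i$ and $G_i$ I would not extract them from the cross term but rather use the two identities that survive when the parts free of $e_1,e_2$ in $f_1^2$ and $f_2^2$ are matched, namely
$$
x_i^{2m_i}E+2x_i^{m_i+n_i}F+x_i^{2n_i}G=P(x_i),\qquad i=1,2.
$$
With $E=E_i$ already known, this is a $2\times 2$ linear system for $F$ and $G$ whose determinant is $2x_1^{n_1}x_2^{n_2}D$, again nonzero under the hypothesis. Solving it by Cramer's rule reproduces verbatim the claimed $F_i$ and $G_i$ (written in terms of $E_i$), which completes the verification of all relations in (\ref{eq:firstint}).
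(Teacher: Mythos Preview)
Your argument is correct, and it reaches the same formulas by a slightly different organisation than the paper. The paper substitutes the ansatz (\ref{eq:firstint}) into $f_i^2=P(x_i)+e_iA$ and matches coefficients, obtaining $p_i,r_i$ and $F,G$ in terms of $E$; it then imposes that $r^2\gamma_3^2-(r\gamma_3)^2=0$ reduce to the form (\ref{eq:integral}), from which the vanishing of the $e_i^2$-coefficients fixes $q_i$, and the remaining quadratic $\varphi(E)=-AC/D^2$ determines the two values $E_i$. You instead invert the $2\times 2$ map $(r,\gamma_3)\mapsto(f_1,f_2)$ and read off $p_i,q_i,r_i$ simultaneously from the $e_i$-coefficients of $r^2,r\gamma_3,\gamma_3^2$; the relation (\ref{eq:integral}) enters through the clean identity $(f_1f_2)^2=P(x_1)P(x_2)+AC=B^2/4$, which replaces the paper's quadratic in $E$ and makes the two branches $f_1f_2=\pm B/2$ (hence $E_1,E_2$) transparent. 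Your route is somewhat more direct, yielding $q_i$ without the separate $e_i^2$-vanishing step; the paper's ansatz-matching route is closer to how one might discover the formulas and makes explicit the equivalence between the compatibility $r^2\gamma_3^2=(r\gamma_3)^2$ and (\ref{eq:integral}).
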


\begin{proof} Replacing (\ref{eq:firstint}) into condition (\ref{eq:analysis1}) with $f_i=x_i^{m_i}\cdot r +x_i^{n_i} \cdot \gamma_3 $, we get
$r^2x_i^{2m_i}+2r \gamma_3 x_i^{m_i+n_i}+\gamma_3^2x_i^{2n_i}=P(x_i)+e_iA(x_1,x_2)$. Collecting coefficients with $e_i$ we obtain system
\begin{equation}
\aligned
 p_2x_1^{2m_1}-2q_2x_1^{m_1+n_1}+r_2x_1^{2n_1} & =A(x_1,x_2)\\
 p_1x_1^{2m_1}-2q_1x_1^{m_1+n_1}+r_1x_1^{2n_1} & =0\\
 Ex_1^{2m_1}+2Fx_1^{m_1+n_1}+Gx_1^{2n_1} & =P(x_1)\\
 p_1x_2^{2m_2}-2q_1x_2^{m_2+n_2}+r_1x_2^{2n_2} & =A(x_1,x_2)\\
 p_2x_2^{2m_2}-2q_2x_2^{m_2+n_2}+r_2x_2^{2n_2} & =0\\
 Ex_2^{2m_2}+2Fx_2^{m_2+n_2}+G x_2^{2n_2}  & =P(x_2)
\endaligned
\end{equation}

with solutions:

\begin{equation}\nonumber
\aligned
p_1 &=\frac{A(x_1,x_2)x_1^{2n_1}}{(x_1^{m_1}x_2^{n_2}-x_2^{m_2}x_1^{n_1})^2}\\
p_2 &=\frac{A(x_1,x_2)x_2^{2n_2}}{(x_1^{m_1}x_2^{n_2}-x_2^{m_2}x_1^{n_1})^2}\\
r_1 &=\frac{A(x_1,x_2)x_1^{2m_1}}{(x_1^{m_1}x_2^{n_2}-x_2^{m_2}x_1^{n_1})^2}\\
r_2 &=\frac{A(x_1,x_2)x_2^{2m_2}}{(x_1^{m_1}x_2^{n_2}-x_2^{m_2}x_1^{n_1})^2}\\
F &=\frac{(x_2^{2n_2}x_1^{2m_1}-x_1^{2n_1}x_2^{2m_2})E+x_1^{2n_1}P(x_2)-x_2^{2n_2}P(x_1)}
{2(x_1^{2n_1}x_2^{m_2+n_2}-x_2^{2n_2}x_1^{m_1+n_1})}\\
G &=-\frac{(x_2^{m_2+n_2}x_1^{2m_1}-x_2^{2m_2}x_1^{m_1+n_1})E-x_2^{m_2+n_2}P(x_1)+x_1^{m_1+n_1}P(x_2)}{x_1^{2n_1}x_2^{m_2+n_2}-x_2^{2n_2}x_1^{m_1+n_1}}
\endaligned
\end{equation}

The second assumption is that the relation

\begin{equation}\label{eq:r^2g_3^2-(rg_3)^2} (E + p_2 e_1 + p_1 e_2)(G
+r_2e_1 + r_1e_2)-(F-q_2e_1 - q_1e_2)^2=0
\end{equation}

is in the form (\ref{eq:integral}). According to
(\ref{eq:integral}), the  coefficients of $e_i^2$ should vanish, so
we get:
\begin{equation}\nonumber
\aligned
q_1 &=\frac{A(x_1,x_2)x_1^{n_1+m_1}}{(x_1^{m_1}x_2^{n_2}-x_2^{m_2}x_1^{n_1})^2}\\
q_2 &=\frac{A(x_1,x_2)x_2^{n_2+m_2}}{(x_1^{m_1}x_2^{n_2}-x_2^{m_2}x_1^{n_1})^2}.
\endaligned
\end{equation}

Replacing these results into (\ref{eq:r^2g_3^2-(rg_3)^2}) it
becomes\\

$$
\frac{A}{(x_1^{m_1}x_2^{n_2}-x_1^{n_1}x_2^{m_2})^2}\left(Ae_1e_2+P(x_2)e_1+P(x_1)e_2\right)+\varphi(E)=0$$
with $\varphi(E)$, a quadratic function of $E$
$$
\varphi(E)= -E^2\frac{(x_1^{m_1-n_1}-x_2^{m_2-n_2})^2}{4}+E\frac{\frac{P(x_1)}{x_1^{2n_1}}+\frac{P(x_2)}{x_2^{2n_2}}}{2}-\frac{(P(x_1)x_2^{2n_2}-P(x_2)x_1^{2n_1})^2}{4x_1^{2n_1}x_2^{2n_2}(x_1^{n_1}x_2^{m_2}-x_1^{m_1}x_2^{n_2})^2}.
$$

Finally, solving the quadratic equation
$$\varphi(E)=-C\cdot\frac{A}{(x_1^{m_1}x_2^{n_2}-x_1^{n_1}x_2^{m_2})^2},$$
we get the solutions

\begin{equation}\label{eq:E}
E_{i} =\frac{x_2^{2n_2}P(x_1)+x_1^{2n_1}P(x_2) \pm
B(x_1,x_2) x_1^{n_1}x_2^{n_2}}{(x_1^{m_1}x_2^{n_2}-x_2^{m_2}x_1^{n_1})^2} ,\, i=1,2.
\end{equation}

\end{proof}

\medskip

\begin{remark}
The discriminant of
\begin{equation}\label{eq:polF}
\mathcal{F}(x_1,x_2,s)=A(x_1,x_2)s^2+B(x_1,x_2)s+C(x_1,x_2)
\end{equation}
as a polynomial in $s$ is factorizable
$$\mathcal{D}_s\mathcal F (x_1,x_2)=B^2(x_1,x_2)-4A(x_1,x_2)C(x_1,x_2)=4P(x_1)P(x_2).$$
If we choose $A,B,C$ to be coefficients of discriminantly separable polynomial of degree two in each variable then
integration of the systems which satisfy assumption of
Theorem 1, will be performed following Kowalevski's procedure in terms of theta-function of genus
two.
\end{remark}

\

\section{Examples}\label{sec:example}

\

Our modal example (\ref{eq:ex1-system}) after change of variables
(\ref{eq:ex1-change1}) fits into introduced subclass of the systems
of the Kowalevski type  as a system that reduces to
(\ref{eq:analysis}), (\ref{eq:analysis1}), (\ref{eq:integral}) with
\begin{equation}\label{eq:ex1-f_i}
f_1=rx_1^2+\gamma_3, \quad f_2=rx_2^2+\gamma_3
\end{equation} and
\begin{equation}\label{eq:ABCP}
\aligned
A(x_1,x_2)&=(x_1-x_2)^2,\\
B(x_1,x_2)&=-2x_1x_2(x_1+x_2)+\frac{g_2}{2}(x_1+x_2)+g_3,\\
C(x_1,x_2)&=x_1^2 x_2^2+\frac{g_2}{2} x_1 x_2+g_3(x_1+x_2)+\frac{g_2^2}{16},\\
P(x)&=2 x^3-\frac{g_2}{2}x-\frac{g_3}{2}.
\endaligned
\end{equation}

Notice here that $$\mathcal{F}(x_1,x_2,s)=A(x_1,x_2)s^2+B(x_1,x_2)s+C(x_1,x_2)$$ is strongly discriminately separable polynomial with the discriminants
$$\mathcal{D}_s\mathcal F (x_1,x_2)=B^2(x_1,x_2)-4A(x_1,x_2)C(x_1,x_2)=4P(x_1)P(x_2)$$
$$\mathcal{D}_{x_1}\mathcal F(x_2,s)=4P(x_2)P(s)$$
$$\mathcal{D}_{x_2}\mathcal F(x_1,s)=4P(x_1)P(s).$$

With previous assumptions, from Theorem \ref{th:integrals} with
$m_1=m_2=2,n_1=0=n_2$ follows that next relations are satisfied:

\begin{equation}\label{eq:ex1-th}
\aligned
 r^2 &= \frac{2}{x_1 + x_2} +  \frac{e_1}{(x_1+x_2)^2} +  \frac{e_2}{(x_1+x_2)^2}\\
 r \gamma_3 &= \frac{4x_1x_2 -g_2}{4(x_1+x_2)}-\frac{x_2^2 e_1}{(x_1+x_2)^2}-\frac{x_1^2 e_2}{(x_1+x_2)^2}\\
 \gamma_3^2 & = -\frac{x_1x_2g_2}{2(x_1+x_2)}+\frac{x_2^4 e_1}{(x_1+x_2)^2}+\frac{x_1^4 e_2}{(x_1+x_2)^2}+\frac{g_3}{2}\\
e_1 \cdot e_2 &= k^2.
\endaligned
\end{equation}

\medskip

Notice here that singular hyperplane $$\alpha: p=0$$ or after a
change of variables (\ref{eq:ex1-change1})
$$\tilde{\alpha}:x_1+x_2=0$$ divides $\mathbb{R}^6$ into two half-spaces, which are simply
connected. Denote those half-spaces with $H^+$ for $p>0$ and $H^-$
for $p<0$.

From the Jacobi theorem, and using Lemma \ref{lemma:invmes}, we come
to the following

\

\begin{proposition}  The system of equations (\ref{eq:ex1-system1}) is completely integrable on
two invariant simply-connected sets $H^+$ and $H^-$, since it has
the first integrals and invariant relations (\ref{eq:ex1-th}), and
an invariant measure with density $\mu =\frac{1}{(x_1+x_2)^2}$.
\end{proposition}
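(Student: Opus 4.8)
The plan is to invoke the Euler--Jacobi theorem on the last multiplier: an autonomous system $\dot z=\mathbf X(z)$ on a domain in $\mathbb R^n$ that possesses an invariant measure with smooth positive density, together with $n-2$ independent first integrals and invariant relations, is integrable by quadratures, its regular common level sets being two-dimensional invariant manifolds on which the flow is rectifiable. Here $n=6$, so the task splits into three parts: exhibit an invariant measure, exhibit $n-2=4$ independent relations, and then justify why the conclusion holds separately on $H^+$ and $H^-$ rather than on all of $\mathbb R^6$.

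First I would transport the invariant measure of Lemma~\ref{lemma:invmes} to the variables of (\ref{eq:ex1-system1}). Since $p=(x_1+x_2)/2$, the density $\rho=1/(4p^2)$ of (\ref{eq:rho}) equals $1/(x_1+x_2)^2$ as a function of the new coordinates. The change of variables (\ref{eq:ex1-change1}) has constant Jacobian determinant: in block form its derivative is lower triangular, with diagonal blocks $\partial(x_1,x_2)/\partial(p,q)$ and $\partial(e_1,e_2)/\partial(\gamma_1,\gamma_2)$ each of determinant $-2\imath$, so the determinant equals the constant $-4$ throughout. Pushing the measure forward therefore only rescales the density by a constant, leaving $\mu=1/(x_1+x_2)^2$ an invariant density for (\ref{eq:ex1-system1}); equivalently, one verifies directly that the divergence of $\mu\mathbf X$ vanishes for the field $\mathbf X$ on the right-hand side of (\ref{eq:ex1-system1}).

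Next I would read off the four relations from (\ref{eq:ex1-th}). Two are genuine first integrals: $e_1e_2=k^2$, immediate from $\dot e_1=-\imath(x_1+x_2)re_1$ and $\dot e_2=\imath(x_1+x_2)re_2$, and the combination in the third line of (\ref{eq:ex1-th}), which equals the constant $g_3/2$ and is thus conserved. The first two lines of (\ref{eq:ex1-th}) express $r^2$ and $r\gamma_3$ as functions of $(x_1,x_2,e_1,e_2)$ with no free constant; these are invariant relations, preserved by the flow by the very construction of Theorem~\ref{th:integrals}, where they arose as the compatibility conditions (\ref{eq:firstint}) with the Kowalevski-type structure. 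Imposing all four and fixing $k^2$ and $g_3$ determines $r$, $\gamma_3$, and one of $e_1,e_2$ from the rest, leaving a two-dimensional common level manifold; these two essential coordinates are precisely the variables that will separate in the subsequent genus-two integration.

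Finally I would apply the Euler--Jacobi theorem on each regular two-dimensional invariant manifold and settle the global picture. The hyperplane $\tilde\alpha:\;x_1+x_2=0$, that is $p=0$, is invariant because $\dot p=pqr$ shows that $p$ never changes sign; hence $H^+$ and $H^-$ are invariant, and on each of them $\mu=1/(x_1+x_2)^2$ is smooth and strictly positive, its only singularity lying on $\tilde\alpha$. The role of simple-connectivity is to make the reconstruction single-valued: recovering $(r,\gamma_3)$ through the square roots $r^2,\,r\gamma_3,\,\gamma_3^2$ carries a sign and branching ambiguity, and on a simply-connected invariant set there is no monodromy obstructing a global choice, so the quadratures define honest global coordinates. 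I expect the main obstacles to be two technical verifications rather than anything conceptual: confirming the functional independence of the four relations, so that the common level sets are genuinely two-dimensional, and checking that the two relations for $r^2$ and $r\gamma_3$ are indeed invariant, that is, that their time derivatives vanish along the flow, which must be traced back to the identities of Theorem~\ref{th:integrals} and the discriminant factorization recorded in the Remark.
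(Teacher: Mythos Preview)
Your proposal is correct and follows exactly the paper's approach: the paper's entire argument is the single sentence ``From the Jacobi theorem, and using Lemma~\ref{lemma:invmes}, we come to the following,'' and you have simply unpacked this by transporting the density $\rho=1/(4p^2)$ under the constant-Jacobian change (\ref{eq:ex1-change1}) to obtain $\mu=1/(x_1+x_2)^2$, identifying the four relations in (\ref{eq:ex1-th}), and noting the invariance and simple-connectedness of $H^\pm$. Your additional discussion of which relations are genuine first integrals versus invariant relations, and of the role of simple-connectedness in resolving the sign ambiguities, goes well beyond what the paper supplies but is consistent with it.
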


\medskip

In extension we will show how this subclass of Kowalevski type systems fits into Kowalevski's transforming procedure, (see \cite{Kow}).\\

Multiplying the first and the third relation from (\ref{eq:ex1-th}) and deducting square of the second one obtains relation in the form (\ref{eq:integral}):
\begin{equation}\label{eq:ex1-analysis}
\frac{(x_1-x_2)^2 e_1 e_2 +P(x_2)e_1 +P(x_1)e_2 -C(x_1,x_2)}{(x_1+x_2)^2}=0,
\end{equation}

Since $e_1e_2=k^2$ we get
$$(\sqrt{e_1} \sqrt{P(x_2)} \pm \sqrt{e_2} \sqrt{P(x_1)})^2=-k^2 (x_1-x_2)^2+C(x_1,x_2)\pm 2 \sqrt{P(x_1)P(x_2)}k$$
The last relations lead to
\begin{equation}\label{eq:ex1-rel1}
\left(\sqrt{e_1}\frac{\sqrt{P(x_2)}}{x_1-x_2}+
\sqrt{e_2}\frac{\sqrt{P(x_1)}}{x_1-x_2}\right)^2=(s_1-k)(s_2+k)
\end{equation}
and
\begin{equation}\label{eq:ex1-rel2}
\left(\sqrt{e_1}\frac{\sqrt{P(x_2)}}{x_1-x_2}-
\sqrt{e_2}\frac{\sqrt{P(x_1)}}{x_1-x_2}\right)^2=(s_1+k)(s_2-k)
\end{equation}
where $s_1, s_2$ are the solutions of the quadratic equation
\begin{equation}\label{eq:ex1-Kowrel}
\mathcal{F}(x_1,x_2,s)=A(x_1,x_2) s^2 +B(x_1,x_2)s + C(x_1,x_2)=0,
\end{equation}
with $A,B,C$ introduced in (\ref{eq:ABCP}).\\

Notice also that relations (\ref{eq:ex1-rel1}) and
(\ref{eq:ex1-rel2}) lead to a morphism between two two-valued
Buchstaber-Novikov groups, as it has been explained in \cite {Drag3}
(for basic notions and examples of the theory of $n$-valued
Buchstaber-Novikov groups see \cite {Buc}).

From (\ref{eq:ex1-rel1}) and (\ref{eq:ex1-rel2}) we get

\begin{equation}\label{eq:ex1-rel3}
\aligned
2\sqrt{e_1}\frac{\sqrt{P(x_2)}}{x_1-x_2}&=\sqrt{(s_1-k)(s_2+k)}+\sqrt{(s_1+k)(s_2-k)}\\
2\sqrt{e_2}\frac{\sqrt{P(x_1)}}{x_1-x_2}&=\sqrt{(s_1-k)(s_2+k)}-\sqrt{(s_1+k)(s_2-k)}.
\endaligned
\end{equation}

Solutions $s_i$ of the quadratic equation $\mathcal{F}(x_1,x_2,s)$ are
$$
\aligned
s_1&=\frac{-B(x_1,x_2)-\sqrt{B^2(x_1,x_2)-4(x_1-x_2)^2C(x_1,x_2)}}{2(x_1-x_2)^2}\\
s_2&=\frac{-B(x_1,x_2)+\sqrt{B^2(x_1,x_2)-4(x_1-x_2)^2C(x_1,x_2)}}{2(x_1-x_2)^2}.
\endaligned
$$
Using the Vi\`{e}te formulae and the discriminant separability
condition we get
\begin{equation}\label{eq:ex1-Viete s_i}
\aligned
s_1+s_2 &= -\frac{B(x_1,x_2)}{(x_1-x_2)^2}\\
s_2-s_1 &= \frac{\sqrt{4P(x_1)P(x_2)}}{(x_1-x_2)^2}.
\endaligned
\end{equation}
Since
$$\dot x_1 = -\frac{\imath}{2} (x_1^2 r +\gamma_3),$$
then
\begin{equation}\label{eq:ex1-rel4}
-4 \dot x_1^2 = x_1^4 r^2+2x_1^2 r \gamma_3 +\gamma_3^2.
\end{equation}
Substituting $r^2,\,r\gamma_3,\,\gamma_3^2$ from (\ref{eq:ex1-th}) into (\ref{eq:ex1-rel4}) we obtain
\begin{equation}\label{eq:ex1-rel5}
\aligned
-4 \dot x_1^2 &= 2x_1^3-\frac{g_2}{2}x_1-\frac{g_3}{2}+(x_1-x_2)^2e_1\\
&=P(x_1)+(x_1-x_2)^2 e_1.
\endaligned
\end{equation}
The same is
$$-4 \dot x_2^2=P(x_2)+(x_1-x_2)^2 e_2.$$

Using (\ref{eq:ex1-rel3}) and (\ref{eq:ex1-Viete s_i}) finally we get
$$
\aligned
-4 \dot x_1^2 &=P(x_1)+(x_1-x_2)^2 e_1\\
&=\frac{(x_1-x_2)^4}{4P(x_2)}[(s_1-s_2)^2+(\sqrt{(s_1-k)(s_2+k)}+\sqrt{(s_1+k)(s_2-k)})^2]\\
&=\frac{P(x_1)}{(s_1-s_2)^2}[\sqrt{(s_1-k)(s_1+k)}+\sqrt{(s_2-k)(s_2+k)}]^2
\endaligned
$$
and
$$-4 \dot x_2^2=\frac{P(x_2)}{(s_1-s_2)^2}[\sqrt{(s_1-k)(s_1+k)}-\sqrt{(s_2-k)(s_2+k)}]^2.$$

>From the last two equations, it follows
$$
\aligned
\frac{dx_1}{\sqrt{P(x_1)}}+\frac{dx_2}{\sqrt{P(x_2)}}&=\imath\frac{\sqrt{(s_1-k)(s_1+k)}}{s_1-s_2}dt\\
\frac{dx_1}{\sqrt{P(x_1)}}-\frac{dx_2}{\sqrt{P(x_2)}}&=\imath\frac{\sqrt{(s_2-k)(s_2+k)}}{s_1-s_2}dt.
\endaligned
$$

Now we will make use of a Lemma \ref{lemma:drag3} stated in Introduction.\\

The strong discriminant separability of polynomial $ \mathcal{F}(x_1,x_2,s)$ implies
\begin{equation}\label{eq:ex1-kowch}
\aligned
\frac{dx_1}{\sqrt{P(x_1)}}+\frac{dx_2}{\sqrt{P(x_2)}}&=\frac{ds_1}{\sqrt{P(s_1)}}\\
\frac{dx_1}{\sqrt{P(x_1)}}-\frac{dx_2}{\sqrt{P(x_2)}}&=-\frac{ds_2}{\sqrt{P(s_2)}}.
\endaligned
\end{equation}

This way, we can finally conclude
\begin{proposition}
The system of differential equations defined by (\ref{eq:ex1-system1}) is
integrated through the solutions of the system
\begin{equation}\label{eq:ex1-kowvareq}
\aligned
\frac{ds_1}{\sqrt{\Phi(s_1)}}+\frac{ds_2}{\sqrt{\Phi(s_2)}}&=0\\
\frac{s_1\,ds_1}{\sqrt{\Phi(s_1)}}+\frac{s_2\,ds_2}{\sqrt{\Phi(s_2)}}&=\imath\,dt,
\endaligned
\end{equation}
where
$$
\Phi(s)=P(s)(s-k)(s+k).
$$
It is
linearized on the Jacobian of the curve
$\Gamma:\,
y^2=\Phi(s)$.
\end{proposition}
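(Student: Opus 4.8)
The plan is to assemble the final proposition from the two preceding displayed identities, the Kowalevski change of variables, and Lemma~\ref{lemma:drag3}, so that the separated equations in $x_1,x_2$ get rewritten as a Jacobi-type system in the variables $s_1,s_2$. First I would take the two differential relations established just above,
$$
\frac{dx_1}{\sqrt{P(x_1)}}\pm\frac{dx_2}{\sqrt{P(x_2)}}
=\imath\,\frac{\sqrt{(s_{1,2}-k)(s_{1,2}+k)}}{s_1-s_2}\,dt,
$$
and substitute the change of variables \eqref{eq:ex1-kowch} obtained from the strong discriminant separability of $\mathcal F(x_1,x_2,s)$. This yields
$$
\frac{ds_1}{\sqrt{P(s_1)}}=\imath\,\frac{\sqrt{(s_1-k)(s_1+k)}}{s_1-s_2}\,dt,
\qquad
\frac{ds_2}{\sqrt{P(s_2)}}=-\imath\,\frac{\sqrt{(s_2-k)(s_2+k)}}{s_1-s_2}\,dt,
$$
where each branch of $\sqrt{P}$ is paired with the matching $s_i$.

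Next I would introduce the curve $\Gamma:\ y^2=\Phi(s)$ with $\Phi(s)=P(s)(s-k)(s+k)$, so that $\sqrt{\Phi(s_i)}=\sqrt{P(s_i)}\,\sqrt{(s_i-k)(s_i+k)}$. Dividing numerator and denominator in each relation by the appropriate radical recasts the two equations as
$$
\frac{(s_1-s_2)\,ds_1}{\sqrt{\Phi(s_1)}}=\imath\,dt,
\qquad
\frac{(s_1-s_2)\,ds_2}{\sqrt{\Phi(s_2)}}=-\imath\,dt.
$$
Adding these two gives the first equation of \eqref{eq:ex1-kowvareq}, since the $(s_1-s_2)$ factors cancel against the sign difference to produce $ds_1/\sqrt{\Phi(s_1)}+ds_2/\sqrt{\Phi(s_2)}=0$; multiplying the first by $s_2$ and the second by $s_1$ (or equivalently forming the appropriate weighted combination) and adding isolates the second equation, $s_1\,ds_1/\sqrt{\Phi(s_1)}+s_2\,ds_2/\sqrt{\Phi(s_2)}=\imath\,dt$. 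This is exactly the classical pair of Abel-type differential equations whose right-hand sides are $0$ and $\imath\,dt$.

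Finally, I would invoke the standard theory of the Abel--Jacobi map: the system \eqref{eq:ex1-kowvareq} is precisely the statement that the symmetric functions of $(s_1,s_2)$ on the hyperelliptic curve $\Gamma$ evolve linearly in $t$ under the Abel map into the Jacobian $\Jac(\Gamma)$. Concretely, the two holomorphic differentials $ds/\sqrt{\Phi(s)}$ and $s\,ds/\sqrt{\Phi(s)}$ span the space of holomorphic $1$-forms on the genus-two curve $\Gamma$, and \eqref{eq:ex1-kowvareq} says their sum over the two points $(s_i,y_i)$ is constant and linear in $t$ respectively; hence the flow is straightened on $\Jac(\Gamma)$ and inverts through genus-two theta functions. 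The step I expect to require the most care is the bookkeeping of square-root branches: one must check that the signs assigned to $\sqrt{P(s_1)}$ and $\sqrt{P(s_2)}$ (inherited from \eqref{eq:ex1-kowch} and from the $\pm$ structure in \eqref{eq:ex1-rel3}) are mutually consistent so that the cancellation producing the homogeneous first equation of \eqref{eq:ex1-kowvareq} is genuine rather than an artifact of a sign choice; once the branches are fixed coherently on $\Gamma$, the remaining algebra is routine and the linearization on $\Jac(\Gamma)$ follows immediately.
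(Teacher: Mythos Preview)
Your proposal is correct and follows essentially the same route as the paper: the proposition is stated as a direct consequence of the two displayed relations
\[
\frac{dx_1}{\sqrt{P(x_1)}}\pm\frac{dx_2}{\sqrt{P(x_2)}}
=\imath\,\frac{\sqrt{(s_{1,2}-k)(s_{1,2}+k)}}{s_1-s_2}\,dt
\]
combined with the Kowalevski change of variables \eqref{eq:ex1-kowch} coming from Lemma~\ref{lemma:drag3}, and your algebra reproducing \eqref{eq:ex1-kowvareq} from these is the intended (and in the paper, implicit) computation. One small bookkeeping slip: to isolate the second equation you should multiply the first relation by $s_1$ and the second by $s_2$ (not the reverse), so that the combination $(s_1-s_2)\bigl(s_1\,ds_1/\sqrt{\Phi(s_1)}+s_2\,ds_2/\sqrt{\Phi(s_2)}\bigr)=\imath(s_1-s_2)\,dt$ appears; your parenthetical ``appropriate weighted combination'' covers this, and the conclusion is unaffected.
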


As a result of Theorem \ref{th:integrals}, there is another system, this time with two constant parameters $g_2$ and $g_3$ which also
reduces to (\ref{eq:analysis}), (\ref{eq:analysis1}),
(\ref{eq:integral}) and satisfies relations in the form (\ref{eq:ex1-th}):
\begin{equation}\nonumber
\aligned
r^2 &=\frac{2(x_1+x_2)(x_1^2+x_2^2-\frac{g_2}{2})-2g_3}{(x_1^2-x_2^2)^2}+\frac{e_1}{(x_1+x_2)^2}+\frac{e_2}{(x_1+x_2)^2}\\
r \gamma_3 &=\frac{(x_1+x_2)^3g_2+4(x_1^2+x_2^2)g_3-4x_1x_2(x_1+x_2)^3}{4(x_1^2-x_2^2)^2}-\frac{x_2^2e_1}{(x_1+x_2)^2}-\frac{x_1^2e_2}{(x_1+x_2)^2}\\
\gamma_3^2 &= \frac{-x_1x_2(x_1+x_2)(x_1^2+x_2^2)g_2-(x_1^2+x_2^2)^2g_3+8x_1^3x_2^3(x_1+x_2)}{2(x_1^2-x_2^2)^2}\\
&-\frac{x_2^4e_1}{(x_1+x_2)^2}-\frac{x_1^4e_2}{(x_1+x_2)^2}.
\endaligned
\end{equation}

Differentiating first and third of previous relations, with $\dot x_i$ and $\dot e_i$ given by (\ref{eq:analysis}), (\ref{eq:ex1-f_i}), we get expressions for $\dot r$ and $\dot \gamma_3$. Replacing these values into differentiated second relation we get so far unknown function $m$ and finally we get system of equations:
$$
\aligned
\dot x_1 &=-\frac{\imath}{2}(x_1^2 r+\gamma_3)\\
\dot x_2 &=\frac{\imath}{2}(x_2^2 r+\gamma_3)\\
\dot e_1 &= -m e_1\\
\dot e_2 &= me_2\\
\dot r &= -\frac{\imath e_1(r(x_2-x_1)-\imath m)}{2(x_1+x_2)^2r}-\frac{\imath e_2(r(x_2-x_1)+\imath m)}{2(x_1+x_2)^2r}\\
&+\frac{\imath g_2(3(x_1^2+x_2^2)r-2x_1x_2r+4\gamma_3)}{4(x_2-x_1)^3(x_2+x_1)r} +\frac{2\imath((x_1^2-x_1x_2+x_2^2)r+\gamma_3)g_3}{r(x_1+x_2)^2(x_2-x_1)^3}\\&
-\frac{\imath((x_1^4+x_2^4+6x_1^2x_2^2)r+2(x_1+x_2)^2\gamma_3)}{2(x_2-x_1)^3(x_1+x_2)r}
\endaligned
$$
$$
\aligned
\dot \gamma_3&= \frac{\imath((x_1+x_2)x_2r+m\imath x_2+2\gamma_3)x_2^3e_1}{\gamma_3(x_1+x_2)^2}-\frac{\imath((x_1+x_2)x_1r+m\imath x_1+2\gamma_3)x_1^3e_2}{\gamma_3(x_1+x_2)^2}\\
&-\frac{\imath(rx_1x_2+\gamma_3)(x_1-\imath x_2)(x_1+\imath x_2)x_1x_2g_3}{\gamma_3(x_1+x_2)^2(x_1-x_2)^3}\\
&-\frac{\imath((x_1^4+6x_1^2x_2^2+x_2^4)\gamma_3+2x_1^2x_2^2(x_1+x_2)^2r)g_2}{8\gamma_3(x_1+x_2)(x_1-x_2)^3}\\
&+\frac{\imath(3(x_1^2+x_2^2)\gamma_3-2x_1x_2\gamma_3+4x_1^2x_2^2r)x_1^2x_2^2}{\gamma_3(x_1+x_2)(x_1-x_2)^3}.
\endaligned
$$
where
$$
\aligned
m &= (x_1+x_2)r\imath+\frac{1}{4\imath(x_1+x_2)^2(x_1-x_2)^3\left((x_2^2r+\gamma_3)^2e_1
-(x_1^2r+\gamma_3)^2e_2\right)}\\
&\cdot \big[\big((x_1+x_2)^3(2x_1^2x_2^2(x_1+x_2)^2r^3+\gamma_3(6x_1^2x_2(1+x_2)+4x_1x_2(x_1^2+x_2^2)+5x_2^4\\
&-3x_2^2+2x_1^4)r^2+2\gamma_3^2(5(x_1^2+x_2^2)+2x_1x_2)r+8\gamma_3^3)g_2
+(8x_1^2x_2^2(x_1^2+x_2^2)\\
&\cdot(x_1+x_2)^2r^3+8\gamma_3(2x_2^6-x_1^2x_2^2+x_1^6+2x_1^3x_2^2+3x_1^4x_2^2+2x_2^3x_1^2
-x_1x_2^3-x_2^4\\
&+2x_1^4x_2+3x_2^5x_1+2x_1^5x_2+4x_2^4x_1^2+6x_2^3x_1^3)r^2+8\gamma_3^2
(4x_2^4+4x_1^2x_2^2-x_2^2\\
&+6x_1^3x_2+6x_1x_2^3+3x_1^4+2x_2x_1^2)r+16\gamma_3^3(x_1+x_2)^2)g_3\\
&-4(x_1+x_2)^3(8r^3x_1^4x_2^4+2x_1x_2^2\gamma_3(-2x_1^2x_2+5x_1x_2^2-2x_2+2x_2^3+5x_1^3+4x_1^2)r^2\\
&+\gamma_3^2(2x_1^4+4x_1^3x_2+x_2^4+x_2^2-2x_1^2x_2+4x_1x_2^3+14x_1^2x_2^2)r+2\gamma_3^3(x_1+x_2)^2)\big)\big]
\endaligned
$$

\medskip

We will now give one more example of system belonging to previously introduced subclass for which relations obtained in Theorem \ref{th:integrals} represent actually set of the first integrals. We will replace Kowalevski's fundamental equation (\ref{eq:fundKowrel}) by strongly discriminantly separable polynomial (\ref{eq:polF}) with coefficients
\begin{equation}\label{eq:coeffF}
\aligned
A(x_1,x_2)&=(x_1-x_2)^2,\\
B(x_1,x_2)&=2x_1x_2(x_1+x_2)+2ax_1x_2+b(x_1+x_2)+2c,\\
C(x_1,x_2)&=x_1^2x_2^2-bx_1x_2-2c(x_1+x_2)+\frac{b^2}{4}-ac.
\endaligned
\end{equation}

Then, discriminants od $\mathcal{F}$ are:
$$
\aligned
\mathcal D_{s}\mathcal F(x_1, x_2)&=P(x_1)P(x_2),\\
\mathcal D_{x_1}\mathcal F(x_2,s)&=P(x_2)P(s),\\
\mathcal D_{x_2}\mathcal F(x_1,s)&=P(x_1)P(s),
\endaligned
$$
with polynomial $P$ of third degree
$$P(x)=2x^3+ax^2+bx+c.$$

Applying result of Theorem \ref{th:integrals} for $m_i=1,n_i=0, i=1,2$ like in Kowalevski's case, but on previously introduced polynomials $A,C$ and $P$ we get that system reduces to (\ref{eq:analysis}), (\ref{eq:analysis1}), (\ref{eq:integral}) also satisfies relations (\ref{eq:firstint}) with
$$
\aligned
p_1&=1,\,p_2=1\\
q_1&=x_1,\,q_2=x_2\\
r_1&=x_1^2,\,r_2=x_2^2
\endaligned
$$
and
\begin{equation}\label{eq:ex2E1}
\aligned
E_1&=2x_1+2x_2+a\\
F_1&=-x_1x_2+\frac{b}{2}\\
G_1&=c,
\endaligned
\end{equation}
or
\begin{equation}\label{eq:ex2E2}
\aligned
E_2&=\frac{2(x_1+x_2)(x_1^2+x_2^2)+a(x_1+x_2)^2+2b(x_1+x_2)+4c}{(x_1-x_2)^2}\\
F_2&=-\frac{2x_1x_2(3x_1^2+2x_1x_2+3x_2^2)+4ax_1x_2(x_1+x_2)}{2(x_1-x_2)^2}\\
&+\frac{b(x_1^2+6x_1x_2+x_2^2)+4c(x_1+x_2)}{2(x_1-x_2)^2}\\
G_2&=\frac{4x_1^2x_2^2(x_1+x_2+a)+2bx_1x_2(x_1+x_2)+c(x_1+x_2)^2}{(x_1-x_2)^2}.
\endaligned
\end{equation}
\begin{lemma} For choice of values $E_1,F_1,G_1$ relations (\ref{eq:firstint}) are
\begin{equation}\label{eq:ex2firstint}
\aligned
r^2&=2(x_1+x_2)+e_1+e_2+a\\
r\gamma_3&=-x_1x_2+\frac{b}{2}-x_2e_1-x_1e_2\\
\gamma_3^2&=x_2^2e_1+x_1^2e_2+c\\
e_1e_2&=d^2
\endaligned
\end{equation}
and represent the first integrals of following system:
\begin{equation}\label{eq:ex2system}
\aligned
\dot x_1&=-\frac{i}{2}(rx_1+\gamma_3)\\
\dot x_2&=\frac{i}{2}(rx_2+\gamma_3)\\
\dot e_1&=-ire_1\\
\dot e_2&=ire_2\\
\dot r&=\frac{i}{2}(x_2-x_1+e_2-e_1)\\
\dot \gamma_3&=\frac{i}{2}(e_1x_2-e_2x_1).
\endaligned
\end{equation}
\end{lemma}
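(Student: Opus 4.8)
The plan is to dispatch the two claims of the lemma in turn. The first---that the general relations (\ref{eq:firstint}) specialize to (\ref{eq:ex2firstint})---is pure substitution of the Theorem~\ref{th:integrals} data for the present exponents $m_1=m_2=1$, $n_1=n_2=0$. With these values the common denominator $(x_1^{m_1}x_2^{n_2}-x_2^{m_2}x_1^{n_1})^2$ reduces to $(x_1-x_2)^2$, which cancels the factor $A=(x_1-x_2)^2$ in every numerator, giving $p_1=p_2=1$, $q_1=x_1$, $q_2=x_2$, $r_1=x_1^2$, $r_2=x_2^2$, together with the $E_1,F_1,G_1$ of (\ref{eq:ex2E1}). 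Inserting these into $r^2=E+p_2e_1+p_1e_2$, $r\gamma_3=F-q_2e_1-q_1e_2$, $\gamma_3^2=G+r_2e_1+r_1e_2$ and $e_1e_2=d^2$ reproduces (\ref{eq:ex2firstint}) term by term, so this step is purely algebraic.

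For the second claim I would show that each of
$$\Phi_1=r^2-2(x_1+x_2)-e_1-e_2,\qquad \Phi_2=r\gamma_3+x_1x_2+x_2e_1+x_1e_2,$$
$$\Phi_3=\gamma_3^2-x_2^2e_1-x_1^2e_2,\qquad \Phi_4=e_1e_2$$
has vanishing derivative along (\ref{eq:ex2system}). The case $\Phi_4$ is immediate, since $\dot\Phi_4=\dot e_1e_2+e_1\dot e_2=(-ir+ir)e_1e_2=0$. For $\Phi_1$ and $\Phi_3$ I note that the two nontrivially reconstructed equations of the system are forced by exactly these relations: solving $2r\dot r=2(\dot x_1+\dot x_2)+\dot e_1+\dot e_2$ for $\dot r$ after inserting the $\dot x_i,\dot e_i$ of (\ref{eq:ex2system}) returns precisely $\dot r=\frac{i}{2}(x_2-x_1+e_2-e_1)$, and solving $2\gamma_3\dot\gamma_3=2x_2\dot x_2e_1+x_2^2\dot e_1+2x_1\dot x_1e_2+x_1^2\dot e_2$ for $\dot\gamma_3$ returns $\dot\gamma_3=\frac{i}{2}(e_1x_2-e_2x_1)$. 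Hence $\Phi_1$ and $\Phi_3$ are conserved by the very construction of $\dot r$ and $\dot\gamma_3$.

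This leaves $\Phi_2$, which is where the only real content sits. Because $r$ and $\gamma_3$ are two functions while the relations prescribe all three of $r^2$, $r\gamma_3$, $\gamma_3^2$, the dynamics is over-determined: $\dot r$ and $\dot\gamma_3$ are already pinned down by $\Phi_1$ and $\Phi_3$, so $\dot\Phi_2=0$ is not a definition but a compatibility requirement. I expect to verify it by the direct, if term-heavy, expansion
$$\dot\Phi_2=\dot r\,\gamma_3+r\,\dot\gamma_3+\dot x_1x_2+x_1\dot x_2+\dot x_2e_1+x_2\dot e_1+\dot x_1e_2+x_1\dot e_2,$$
substituting (\ref{eq:ex2system}) and watching each monomial cancel against a partner. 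The main obstacle is precisely this cancellation, but it is not accidental: it is the infinitesimal counterpart of the algebraic identity $(r^2)(\gamma_3^2)=(r\gamma_3)^2$, that is, of relation (\ref{eq:r^2g_3^2-(rg_3)^2}) being of the form (\ref{eq:integral}), which Theorem~\ref{th:integrals} secures for the strongly discriminantly separable $\mathcal F$ with coefficients (\ref{eq:coeffF}). Thus discriminant separability is exactly what guarantees that the reconstructed $\dot r,\dot\gamma_3$ are consistent with the third quadratic relation, and once $\dot\Phi_2=0$ is confirmed all four relations (\ref{eq:ex2firstint}) are simultaneously preserved, which proves the lemma.
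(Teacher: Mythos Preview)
Your proposal is correct, and the direct verification of $\dot\Phi_2=0$ does indeed go through term by term (the $\gamma_3$--terms and the $r$--terms cancel separately). But your route differs from the paper's in orientation. You take the full system (\ref{eq:ex2system}) as given---in particular with $\dot e_i=\mp i r e_i$ already specified---and then check that each $\Phi_j$ is conserved. The paper instead treats the function $m$ in $\dot e_1=-m e_1$, $\dot e_2=m e_2$ as \emph{unknown}: it differentiates the first and third relations of (\ref{eq:ex2firstint}) to obtain $\dot r$ and $\dot\gamma_3$ as expressions still depending on $m$, then substitutes these into the derivative of the second relation and finds that it is identically satisfied precisely when $m=ir$. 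Only at that point does the system (\ref{eq:ex2system}) emerge.

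Both arguments prove the lemma as stated. Yours is a clean a posteriori check and is arguably the shortest path once the system is on the table; the paper's is constructive, showing how the last two equations of (\ref{eq:ex2system}) and the value of $m$ are forced by the requirement that all three quadratic relations be conserved simultaneously. Your conceptual remark that the compatibility of $\Phi_2$ with $\Phi_1,\Phi_3$ is the infinitesimal shadow of $(r^2)(\gamma_3^2)=(r\gamma_3)^2$, hence of (\ref{eq:r^2g_3^2-(rg_3)^2}) and discriminant separability, is a nice addition that the paper's proof leaves implicit.
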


\begin{proof}
We start with assumptions that system of equations is of the form (\ref{eq:analysis}) with $f_i=rx_i+\gamma_3,i=1,2$.
Like in previous example, by differentiating the first and third of relations (\ref{eq:ex2firstint}) we get
$$
\aligned
\dot r&=\frac{i}{2}(x_2-x_1)+\frac{m}{2r}(e_2-e_1)\\
\dot \gamma_3&=\frac{e_1x_2}{2\gamma_2}(irx_2+i\gamma_3-mx_2)+\frac{e_2x_1}{2\gamma_3}(mx_1-irx_1-i\gamma_3).
\endaligned
$$
Then replacing previously obtained values for $\dot r$ and $\dot \gamma_3$ into differentiated second relation
from (\ref{eq:ex2firstint}), we get that it will be identically satisfied for function
$$m=ir$$
what brings us to system (\ref{eq:ex2system}).
\end{proof}

\medskip

\begin{lemma} The system (\ref{eq:ex2system}) preserves the standard measure.
\end{lemma}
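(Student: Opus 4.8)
The plan is to verify the Liouville (zero-divergence) condition for the vector field $\mathbf{X}=(X_1,\dots,X_6)$ defining the system (\ref{eq:ex2system}) in the coordinates $(x_1,x_2,e_1,e_2,r,\gamma_3)$, exactly as was done for the modal example in the proof of Lemma \ref{lemma:invmes}. The \emph{standard measure} is the Lebesgue measure, whose density is the constant $1$; it is preserved precisely when
$$
\frac{\partial X_1}{\partial x_1}+\frac{\partial X_2}{\partial x_2}+\frac{\partial X_3}{\partial e_1}+\frac{\partial X_4}{\partial e_2}+\frac{\partial X_5}{\partial r}+\frac{\partial X_6}{\partial \gamma_3}=0.
$$

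First I would read off the six right-hand sides of (\ref{eq:ex2system}) and differentiate each with respect to its own variable. The key structural observation is that each of the first four equations is homogeneous linear in the variable whose derivative it governs: $X_1=-\tfrac{i}{2}(rx_1+\gamma_3)$ depends on $x_1$ only through the term $-\tfrac{i}{2}rx_1$, and similarly for $X_2,X_3,X_4$. Meanwhile the last two right-hand sides, $X_5=\tfrac{i}{2}(x_2-x_1+e_2-e_1)$ and $X_6=\tfrac{i}{2}(e_1x_2-e_2x_1)$, are independent of $r$ and of $\gamma_3$ respectively, so the final two summands vanish identically.

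Carrying out the differentiations gives $\partial X_1/\partial x_1=-\tfrac{i}{2}r$, $\partial X_2/\partial x_2=\tfrac{i}{2}r$, $\partial X_3/\partial e_1=-ir$, $\partial X_4/\partial e_2=ir$, and $\partial X_5/\partial r=\partial X_6/\partial\gamma_3=0$. These cancel in pairs, the $x_1,x_2$ contributions summing to zero and the $e_1,e_2$ contributions summing to zero, so the total divergence is $0$ and the conclusion follows.

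There is no genuine obstacle here: the statement reduces to a one-line divergence computation, in contrast with Lemma \ref{lemma:invmes}, where the divergence was the nonzero function $2qr$ and a nontrivial density $\rho=1/(4p^2)$ had to be found. The only point deserving attention is to confirm that $\dot r$ carries no $r$-dependence and $\dot\gamma_3$ no $\gamma_3$-dependence, which is immediate from the explicit form of (\ref{eq:ex2system}); once this is noted, the pairwise cancellation of the four remaining diagonal derivatives closes the argument.
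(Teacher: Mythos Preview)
Your proposal is correct and follows essentially the same approach as the paper: both compute the divergence of the vector field defining (\ref{eq:ex2system}) and verify it vanishes identically, with the four nonzero diagonal derivatives cancelling in pairs. The only cosmetic difference is the ordering of the coordinates (the paper lists them as $(x_1,x_2,r,e_1,e_2,\gamma_3)$), which is immaterial.
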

\begin{proof}
As usually, the system (\ref{eq:ex2system}) can be rewritten in a
more compact form:
$$\frac{dx_1}{X_1}=\frac{dx_2}{X_2}=\frac{dr}{X_3}=\frac{de_1}{X_4}=\frac{de_2}{X_5}=\frac{d\gamma_3}{X_6}=dt$$
with
$$
\aligned
X_1&=-\frac{i}{2}(rx_1+\gamma_3)\\
X_2&=\frac{i}{2}(rx_2+\gamma_3)\\
X_3&=\frac{i}{2}(x_2-x_1+e_2-e_1)\\
X_4&=-ire_1\\
X_5&=ire_2\\
X_6&=\frac{i}{2}(e_1x_2-e_2x_1).
\endaligned
$$

Then the divergence of $\mathbf{X}=(X_1,X_2,X_3,X_4,X_5,X_6)$ is
zero:
$$
\frac{\partial X_1}{\partial x_1}+\frac{\partial X_2}{\partial x_2}+\frac{\partial X_3}{\partial r}+\frac{\partial X_4}{\partial e_1}+\frac{\partial X_5}{\partial e_2}+\frac{\partial X_6}{\partial \gamma_3}=0
$$
so, the standard measure is preserved.
\end{proof}

\medskip

The standard measure is invariant under the flow associated with
(\ref{eq:ex2system}), which makes the system of ordinary
differential equations (\ref{eq:ex2system}) with four first
integrals (\ref{eq:ex2firstint}) completely integrable by Jacobi's
theorem.

\medskip

We have shown that our modal example and the class of systems we
have considered in this paper share many interesting properties,
typical for completely integrable Hamiltonian systems. Through the
connection with discriminantly separable polynomials, they are
particularly close to the celebrated Kowalevski top. Thus, as an
important question, it remains to be seen if they admit a Poisson
structure in which they are Hamiltonian. The question of physical or
mechanical interpretation of such systems is not less interesting.

\subsection*{Acknowledgements}

The authors use the opportunity to thank Prof. B. Gaji\'{c},
Prof. A. Borisov and the referee for stimulating comments and
suggestions. The research was partially supported by the Serbian Ministry of
Science and Technology, Project 174020 {\it Geometry and Topology of
Manifolds, Classical Mechanics and Integrable Dynamical Systems} and
by the Mathematical Physics Group of the University of Lisbon,
Project \emph{ Probabilistic approach to finite and infinite
dimensional dynamical systems, PTDC/MAT/104173/2008}.

\newpage\thispagestyle{empty}
\vspace*{20mm}

\end{document}